\newtheorem{theorem}{Theorem}[section]
\newtheorem{corollary}[theorem]{Corollary}
\theoremstyle{definition}
\theoremstyle{definitions}
\newtheorem{definition}[theorem]{Definition}
\newtheorem{remark}[theorem]{Remark}
\newtheorem{example}[theorem]{Example}
\theoremstyle{notations}
\theoremstyle{remarks}
\newcommand{\sub}{\subseteq}
\newcommand{\lo}{\longrightarrow}
\newcommand{\wt}{\widetilde}
\newcommand{\vf}{\varphi}
\newcommand{\al}{\alpha}
\newcommand{\la}{\lambda}
\newcommand{\bt}{\beta}
\newcommand{\ti}{\tilde}
\newcommand{\lk}{\langle}
\newcommand{\rg}{\rangle}
\newcommand{\psg}{\pi_1^{sg}(X,x)}
\newcommand{\pc}{p:\wt{X}\lo X}
\newcommand{\V}{\mathcal{V}}
\newcommand{\U}{\mathcal{U}}
\journal{Topology and its Applications}
\begin{document}

\begin{frontmatter}



\title{On the Spanier Groups and Covering and Semicovering Spaces}


\author[a]{Hamid~Torabi}
\ead{hamid$_{-}$torabi86@yahoo.com}
\author[b]{Ali~Pakdaman}
\ead{Alipaky@yahoo.com}
\author[a]{Behrooz~Mashayekhy\corref{cor1}}
\ead{bmashf@um.ac.ir}
\address[a]{Department of Pure Mathematics, Center of Excellence in Analysis on Algebraic Structures, Ferdowsi University of Mashhad,\\
P.O.Box 1159-91775, Mashhad, Iran.}
\address[b]{Department of Mathematics, Faculty of Sciences, Golestan University,\\
Gorgan, Iran.}
\cortext[cor1]{Corresponding author}
\begin{abstract}
For a connected, locally path connected space $X$, let $H$ be a subgroup of the fundamental group of $X$, $\pi_1(X,x)$. We show that there
exists an open cover $\cal U$ of $X$ such that $H$ contains the Spanier group $\pi({\U},x)$ if and only if the core of $H$ in $\pi_1(X,x)$
is open in the quasitopological fundamental group $\pi_1^{qtop}(X,x)$ or equivalently it is open in the topological fundamental group $\pi_1^{\tau}(X,x)$.
As a consequence, using the relation between the Spanier groups and covering spaces, we give a classification for connected covering spaces of $X$ based on
the conjugacy classes of subgroups with open core in $\pi_1^{qtop}(X,x)$. Finally, we give a necessary and sufficient condition for the existence of a semicovering. Moreover, we present a condition under which every semicovering of $X$ is a covering.
\end{abstract}

\begin{keyword}
 Covering space\sep Semicovering \sep Spanier group\sep Quasitopological fundamental group\sep Topological fundamental group\sep Semilocally small generated.
\MSC[2010]{57M10, 57M12, 57M05, 55Q05 }

\end{keyword}

\end{frontmatter}


\section{Introduction and motivation}
The motivation of this paper is the following interesting classical theorem of Spanier \cite[Theorem 2.5.13]{S} for the existence of covering spaces:

\begin{theorem}
For a connected, locally path connected space $X$, if $H$ is a subgroup of $\pi_1(X,x)$ and there is an open cover $\U$ of $X$ such that $\pi(\U,x)\leq H$, then there exists a covering $\pc$ such that $p_*\pi_1(\wt{X},\ti{x})=H$.
\end{theorem}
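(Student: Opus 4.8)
The plan is the classical construction of the covering as a \emph{space of path classes}, in which the cover $\U$ supplies the topology of the total space and the hypothesis $\pi(\U,x)\le H$ is used exactly to force the sheets over each member of $\U$ to be mapped homeomorphically onto the base. First I would replace $\U$ by the cover $\U'$ consisting of all path-connected open subsets of members of $\U$; since $X$ is locally path connected this is again an open cover of $X$, and indeed a basis for its topology. Because every loop lying in a member of $\U'$ also lies in a member of $\U$, we get $\pi(\U',x)\le\pi(\U,x)\le H$, so we may assume henceforth that every member of $\U$ is path connected and that $\U$ is a basis for $X$. Now let $\wt X$ be the set of equivalence classes $\lk\al\rg$ of paths $\al$ in $X$ with $\al(0)=x$, where $\lk\al\rg=\lk\bt\rg$ means $\al(1)=\bt(1)$ and $[\al\cdot\bt^{-1}]\in H$ (this is an equivalence relation because $H$ is a subgroup); define $p(\lk\al\rg)=\al(1)$ and put $\ti x=\lk c_x\rg$, the class of the constant path at $x$.

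To topologize $\wt X$, attach to each $\lk\al\rg\in\wt X$ and each $U\in\U$ with $\al(1)\in U$ the set
\[
(U,\lk\al\rg)=\{\lk\al\cdot\gamma\rg:\gamma\text{ a path in }U,\ \gamma(0)=\al(1)\}.
\]
One checks these form a basis: if $\lk\al'\rg\in(U,\lk\al\rg)$, say $\lk\al'\rg=\lk\al\cdot\gamma\rg$ with $\gamma$ a path in $U$, then $(U,\lk\al'\rg)=(U,\lk\al\rg)$, since any path $\zeta$ in $U$ issuing from $\al(1)$ satisfies $\al\cdot\zeta\simeq\al\cdot\gamma\cdot(\gamma^{-1}\cdot\zeta)$ rel endpoints; hence two basic sets meeting at a point both contain a basic set indexed by a member of $\U$ lying in both. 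With this topology $p$ is open, because $p((U,\lk\al\rg))=U$ (as $U$ is path connected), and continuous, because $p^{-1}(U)=\bigcup_{\al(1)\in U}(U,\lk\al\rg)$ for each $U\in\U$ and $\U$ is a basis for $X$; moreover each $p|_{(U,\lk\al\rg)}$ is onto $U$.

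The crucial step is that every $U\in\U$ is evenly covered. The sets $(U,\lk\al\rg)$ with $\al(1)\in U$ cover $p^{-1}(U)$, and by the computation above any two of them are equal or disjoint; so it remains to see $p$ is injective on each. If $p(\lk\al\cdot\gamma_1\rg)=p(\lk\al\cdot\gamma_2\rg)$, i.e.\ $\gamma_1(1)=\gamma_2(1)$, then $\gamma_1\cdot\gamma_2^{-1}$ is a loop in $U$ and $\al$ is a path from $x$ to $\al(1)$, so $[\al\cdot(\gamma_1\cdot\gamma_2^{-1})\cdot\al^{-1}]$ is one of the defining generators of $\pi(\U,x)$ and therefore lies in $H$, which says precisely $\lk\al\cdot\gamma_1\rg=\lk\al\cdot\gamma_2\rg$. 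Thus $p|_{(U,\lk\al\rg)}$ is a continuous open bijection onto $U$, i.e.\ a homeomorphism, and $\pc$ is a covering map. This is the single place the Spanier-group hypothesis is used, and it is exactly what is needed.

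It remains to check connectedness and to identify the subgroup. Since $X$ is path connected, for any path $\al$ from $x$ the map $t\mapsto\lk\al_t\rg$, where $\al_t(s)=\al(ts)$, is a path in $\wt X$ from $\ti x$ to $\lk\al\rg$ — indeed it is the $p$-lift of $\al$ starting at $\ti x$ — so $\wt X$ is (path) connected. For a loop $\al$ at $x$, its lift $\ti\al(t)=\lk\al_t\rg$ is a loop at $\ti x$ if and only if $\lk\al\rg=\lk c_x\rg$, i.e.\ $[\al]\in H$; hence a class $[\al]$ lies in $\pst$ exactly when $[\al]\in H$, giving $\pst=H$. \textbf{The main obstacle} I anticipate lies not in this final computation but in the point-set bookkeeping of the second paragraph — proving that the declared collection really is a basis and that $p$ is continuous and open for it — since the conceptual heart of the theorem, the injectivity of $p$ on the sheets, drops out at once from $\pi(\U,x)\le H$.
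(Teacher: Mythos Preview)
Your argument is correct and is precisely the classical path-class construction due to Spanier. Note, however, that the paper does not actually supply its own proof of this statement: Theorem~1.1 is quoted from \cite[Theorem~2.5.13]{S} as a known result and is used as a black box (for instance in the proofs of Theorem~3.7 and Theorem~4.4). There is therefore no in-paper proof to compare against; what you have written is essentially Spanier's original argument, carried out cleanly, with the refinement to a path-connected basis, the verification that the sets $(U,\lk\al\rg)$ form a basis, the even-covering check (where the hypothesis $\pi(\U,x)\le H$ is invoked exactly once, for injectivity on sheets), and the identification $p_*\pi_1(\wt X,\ti x)=H$ via lifts of loops all in the standard order.
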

Since for a locally path connected and semilocally simply connected space $X$ there exists an open cover $\U$ such that $\pi(\U,x)=1$, for a point $x\in X$, the existence of simply connected universal covering follows from the above theorem.

We recall from \cite{FRVZ} that the Spanier group $\pi(\U,x)$ with respect to the open cover $\mathcal{U}=\{U_i | i\in I\}$ is defined to be
the subgroup of $\pi_1(X, x)$ which contains all homotopy classes having representatives of the following type:
$$\prod\limits_{j=1}^{n}u_jv_ju_j^{-1},$$
where $u_j$ are arbitrary paths starting at $x$ and each $v_j$ is a loop inside one of the open sets $U_j\in\mathcal{U}$.

One of the main objects of this paper is to find conditions on a subgroup $H$ of $\pi_1(X,x)$ under which there is an open cover $\U$ of $X$ such that $\pi(\U,x)\leq H$.
In Section 2 without using Theorem 1.1, we show that every open normal subgroup $H$ of the quasitopological fundamental group $\pi_1^{qtop}(X,x)$ satisfies the above property. Then using this fact we obtain a weaker condition on the subgroup $H$ in order to satisfies the above property. In fact, we prove that for every subgroup $H$ of $\pi_1(X,x)$ with open core in $\pi_1^{qtop}(X,x)$ there is an open cover $\U$ of $X$ such that $\pi(\U,x)\leq H$. We also give an example to show that locally path connectedness of $X$ is essential for the above result.
Later in Section 3, we show that if the subgroup $H$ contains a Spanier group, then its core, $H_{\pi_1(X,x)}$, is open in $\pi_1^{qtop}(X,x)$.

We recall that the quasitopological fundamental group $\pi_1^{qtop}(X,x)$ is the quotient space of the loop space $\Omega(X,x)$ equipped with the compact-open topology
with respect to the function $ \Omega(X,x)\longrightarrow \pi_1(X,x)$ identifying path components (see \cite{B}). It should be mentioned that $\pi_1^{qtop}(X,x)$ is a quasitopological group in the sense of \cite{A} and it is not always a topological group (see \cite{Br2,F}). Also we recall from \cite{Br3} that the topological fundamental group $\pi_1^{\tau}(X,x)$ is the fundamental group $\pi_1(X,x)$
with the finest group topology on $\pi_1(X,x)$ such that the canonical function $ \Omega(X,x)\longrightarrow \pi_1(X,x)$ identifying path components is continuous. It should be mentioned that
$\pi_1^{\tau}(X,x)$ and $\pi_1^{qtop}(X,x)$ have the same open subgroups \cite[Proposition 4.4]{Br3} and every open set of $\pi_1^{\tau}(X,x)$ is also an open set in $\pi_1^{qtop}(X,x)$.

Biss \cite[Theorem 5.5]{B} showed that for a connected, locally path connected space $X$, there
is a 1-1 correspondence between its equivalent classes of connected covering spaces and the conjugacy classes of open subgroups of
its fundamental group $\pi_1(X,x)$. There is a misstep in the proof of the above theorem. In fact, Biss assumed that every fibration with discrete fiber is a covering map which is not true in general. We give an example (Example 4.2) to show that the above classification of connected covering spaces does not hold.
Recently, Brazas and Fabel \cite[Theorem 46]{BF} gave a classification for the equivalence classes of connected coverings of a connected, locally path connected, paracompact and Hausdorrf space $X$ based on the cojugacy classes of open subgroups of $\pi_1(X,x)$ with the shape topology.
In Section 3, first we introduce a {\it path open cover} $\V$ of a pointed space $(X,x)$ and also the {\it path Spanier group} $\wt{\pi}_(\V,x)$ with respect to $\V$. Then we show that $\wt{\pi}_(\V,x)$ is an open subgroup of $\pi_1^{qtop}(X,x)$ and using this fact we give a necessary and sufficient condition for a subgroup of $\pi_1(X,x)$ to be open in $\pi_1^{qtop}(X,x)$.  Second, using these facts and the main results of Section 2, we present a suitable true classification of connected covering spaces with respect to the Biss's ones and an extended classification with respect to the Brazas and Fabel's ones as follows which is the second main object of the paper.

{\it  For a connected, locally path connected space $X$, there
is a 1-1 correspondence between its equivalent classes of connected covering spaces and the cojugacy classes of subgroups of
its fundamental group $\pi_1(X,x)$, with open core in $\pi_1^{qtop}(X,x)$.}

Brazas \cite[Definition 3.1]{Br1} introduced the notion of  semicovering map as a local homeomorphism with continuous lifting of paths and homotopies. He \cite[Proposition 3.7]{Br1} showed that every covering is a semicovering but the converse is not true in general. In fact, he
\cite[Example 3.8]{Br1} showed that there is a semicovering and hence a Serre fibration of the Hawaiian earring with discrete fiber which is not a covering.
Brazas \cite[Corollary 7.20]{Br1} also gave a classification for connected semicoverings and asserts that for a connected, locally wep-connected space $X$ there is a Galois correspondence between its equivalence classes of connected semicovering spaces and the conjugacy classes of open subgroups of its topological fundamental group $\pi_1^{\tau}(X,x)$.
In Section 4 using this classification, we extend the well-known result of Spanier, Theorem 1.1, to connected semicoverings.

Brazas \cite[Corollary 7.2]{Br1} proved that for a connected, locally path connected and semilocally simply connected space $X$, the category of  all semicoverings of $X$, $\mathbf{SCov}(X)$ is equivalent to the category of all coverings of $X$, $\mathbf{Cov}(X)$. He also raised a question that if there are more general conditions guaranteeing $\mathbf{SCov}(X)\simeq \mathbf{Cov}(X)$.  The last object of the paper is to find weaker conditions than semilocally simply connectedness under which every semicovering is a covering. As the final main result of Section 4, we show that for a connected, locally path connected and semilocally small generated space $X$, every semicovering of $X$ is a covering space. We recall from \cite{T2} that a space $X$  is semilocally small generated if for every $x\in X$ there exists an open neighborhood $U$ of $x$ such that $i_*\pi_1(U,x)\leq \psg$, where $i_*$ is the induced homomorphism from the inclusion $i:U\hookrightarrow X$ and the small generated subgroup $\psg$ is the subgroup generated by the following set $$\{[\al*\bt*\al^{-1}]\ |\ [\bt]\in\pi_1^s(X,\al(1)),\ \al\in P(X,x)\},$$
where $P(X,x)$ is the space of all paths from $I$ into $X$ with initial point $x$ and $\pi_1^s(X,\alpha(1))$ is the small subgroup of $X$ at $\alpha(1)$ (see \cite{V}).


\section{Relation between open subgroups and Spanier groups}
The following theorem gives a sufficient condition on an open subgroup in order to contain a Spanier group.
\begin{theorem}
If $X$ is a locally path connected space and $H$ is an open normal subgroup of $\pi_1^{qtop}(X,x)$, then there exists an open cover $\U$ such that $\pi(\U,x)\leq H$.
\end{theorem}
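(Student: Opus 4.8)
The plan is to produce, for each point $y \in X$, a path-connected open neighborhood $U_y$ whose fundamental group maps into $H$ after conjugating along a path from $x$ to $y$, and then to let $\U = \{U_y \mid y \in X\}$. If this can be arranged, then every generator $u v u^{-1}$ of $\pi(\U,x)$, with $v$ a loop in some $U_y$, lies in $H$: writing $u$ as a path from $x$ to some point of $U_y$, composing with a path inside $U_y$ to the basepoint $y$ of $U_y$, and using path-connectedness of $U_y$, one sees $[uvu^{-1}]$ is a conjugate (by a class we will have arranged to normalize into $H$) of an element of $i_*\pi_1(U_y,y)$; normality of $H$ then finishes it. So the real content is the construction of the $U_y$.

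First I would fix $y \in X$, choose a path $\alpha$ from $x$ to $y$, and consider the subset $N_\alpha = \{[\beta] \in \pi_1(X,y) \mid [\alpha * \beta * \alpha^{-1}] \in H\}$, which is a subgroup of $\pi_1(X,y)$; since $H$ is open in $\pi_1^{qtop}(X,x)$ and conjugation by $[\alpha]$ induces a homeomorphism $\pi_1^{qtop}(X,y) \to \pi_1^{qtop}(X,x)$ (the change-of-basepoint isomorphism is a homeomorphism for the quasitopological fundamental group), $N_\alpha$ is open in $\pi_1^{qtop}(X,y)$. Next I would use the definition of the quotient topology on $\pi_1^{qtop}(X,y)$: the preimage of $N_\alpha$ under $\Omega(X,y) \to \pi_1(X,y)$ is open in the compact-open topology and contains the constant loop $c_y$. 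Hence there is a basic open neighborhood of $c_y$ of the form $\bigcap_{k} \langle K_k, W_k \rangle$ (finitely many compact $K_k \subseteq I$, open $W_k \subseteq X$) contained in that preimage; intersecting the $W_k$ that contain $y$ and using local path-connectedness of $X$, I would extract a path-connected open neighborhood $U_y \ni y$ such that every loop in $U_y$ based at $y$ lies in this basic open set, hence represents a class in $N_\alpha$. That is exactly the condition $[\alpha][i_*\pi_1(U_y,y)][\alpha]^{-1} \subseteq H$.

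Finally I would assemble $\U = \{U_y\}_{y\in X}$ and verify $\pi(\U,x) \le H$ by the argument of the first paragraph, taking care that for a generator $u v_j u^{-1}$ with $v_j$ a loop in $U_{y}$ based at an arbitrary point of $U_y$ (not necessarily at $y$), one inserts a path inside $U_y$ from $u(1)$ to $y$ to reduce to a based loop at $y$, at the cost of multiplying $[u]$ by a class of the form (path to $U_y$) composed with (path inside $U_y$); the resulting conjugating class is $[\alpha_y] \cdot (\text{loop})$ for a suitable choice, and one concludes $[u v_j u^{-1}] \in H$ using that $H$ contains $[\alpha_y] i_*\pi_1(U_y,y)[\alpha_y]^{-1}$ together with the normality of $H$ to absorb the extra loop factor.

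The step I expect to be the main obstacle is the passage from "$N_\alpha$ is open in $\pi_1^{qtop}(X,y)$" to "there is a path-connected open $U_y$ with all its loops in $N_\alpha$": this is where local path-connectedness of $X$ is genuinely used and where one must be careful that a basic compact-open neighborhood of the constant loop $c_y$ really does force the loops of a small enough neighborhood of $y$ into $N_\alpha$ — the compact sets $K_k$ in the subbasic sets need not be intervals, so one has to argue that a loop with image in $\bigcap\{W_k : y \in W_k\}$ lies in every relevant subbasic set $\langle K_k, W_k\rangle$ (those with $y \notin W_k$ are handled by shrinking, or rather such sets cannot occur in a neighborhood of $c_y$ unless $K_k$ is empty). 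Getting this local argument clean, and making the basepoint-juggling in the last paragraph precise, is the heart of the proof.
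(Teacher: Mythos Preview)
Your proposal is correct and follows the same overall strategy as the paper: for each $y$ produce a path-connected open $U_y$ with $[\alpha]\,i_*\pi_1(U_y,y)\,[\alpha]^{-1}\le H$ by exploiting openness of $H$ via the quotient map from the loop space, then set $\U=\{U_y\}$ and use normality of $H$ for the basepoint-juggling in the final verification. The one difference is in how $U_y$ is extracted. The paper works directly in $\Omega(X,x)$: it takes a compact-open neighborhood of $\alpha*\alpha^{-1}$ contained in $q^{-1}(H)$ and then builds by hand an explicit reparametrized representative of $\alpha*\beta*\alpha^{-1}$ (inserting $\beta$ in a small interval around $t=1/2$) and checks it remains in that neighborhood. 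You instead invoke the change-of-basepoint homeomorphism $\pi_1^{qtop}(X,y)\to\pi_1^{qtop}(X,x)$ to transfer the problem to a neighborhood of the constant loop $c_y$ in $\Omega(X,y)$, which is cleaner and makes your stated worry dissolve: if $c_y\in\langle K_k,W_k\rangle$ with $K_k\neq\emptyset$ then automatically $y=c_y(K_k)\in W_k$, so every relevant $W_k$ already contains $y$ and any loop in the path component of $\bigcap_k W_k$ at $y$ lies in the basic set. Both routes buy exactly the same conclusion; yours packages the ``insert a small loop at $y$'' step more conceptually, while the paper's explicit construction avoids citing that change-of-basepoint is a homeomorphism for $\pi_1^{qtop}$.
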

\begin{proof}
 First we show that for every $y$ in the path component of $X$ containing $x$, there exists an open neighborhood $V$ of $y$ in $X$ such that $[\lambda](i_*\pi_1(V,y))[\lambda]^{-1}\leq H$ for all path $\lambda$ from $x$ to $y$, where $i_*$ is the induced homomorphism from the inclusion $i:V\hookrightarrow X$. To prove this, let $\alpha$ be a path from $x$ to $y$. Since $[\al*\al^{-1}]=[e_x] \in H$, $\al*\al^{-1}\in q^{-1}(H)$, where $q:\Omega(X,x)\lo\pi_1(X,x)$ is the quotient map defined by $q(\gamma)=[\gamma]$. Hence there exists a basic open neighborhood $\bigcap_{i=1}^n\lk K_i,U_i\rg$ of $\al*\al^{-1}$ in $\Omega(X,x)$ such that $\bigcap_{i=1}^n\lk K_i,U_i\rg\sub q^{-1}(H)$.

 Let $A=\{i\in\{1,2,...,n\}|\ 1/2\in K_i\}$ and $B=\{i\in\{1,2,...,n\}|\ 1/2\notin K_i\}$. Since $\bigcup_{i\in B}K_i$ is compact, there exists $\delta_1>0$ such that $[1/2-\delta_1,1/2+\delta_1]\cap K_i=\varnothing$ for every $i\in B$.
If $A\neq\varnothing$, then $V=\bigcap_{i\in A}U_i$ is a nonempty open subset of $X$ which contains $(\al*\al^{-1})(1/2) = \al(1) = y$. Choose $\delta_2>0$ such that $[1-2\delta_2,1]\sub \al^{-1}(V)$. If $A=\varnothing$, then put $V=X$ and $\delta_2=1/2$. Let $\bt:I\lo V$ be any loop at $y$. Define $f:I\lo X$ by
\begin{displaymath}
f(t)= \left\{
\begin{array}{lr}
\al(2t)                     &       0\leq t\leq 1/2 \\
\beta\circ\varphi_1(t)     &      1/2\leq t\leq 1/2+\delta/2\\
\al^{-1}(2\varphi_2(t)-1)   &  1/2+\delta/2\leq t\leq 1/2+\delta\\
\al^{-1}(2t-1)              &  1/2+\delta\leq t\leq 1,
\end{array}
\right.
\end{displaymath}
where $\delta=min\{\delta_1,\delta_2\}$, $\varphi_1:[1/2,1/2+\delta/2]\lo I$ and $\varphi_2:[1/2+\delta/2,1/2+\delta]\lo [1/2,1/2+\delta]$ are linear homeomorphisms with $\varphi_1(1/2)=0$ and $\varphi_2(1/2+\delta/2)=1/2$. By gluing lemma,
$f$ is continuous and hence it is a loop such that $[f]=[\al*\bt*\al^{-1}]$.

We claim that $f\in\bigcap_{i=1}^n\lk K_i,U_i\rg$.
For every $i\in B$, $f(K_i)\sub U_i$ since $K_i\sub I\setminus[1/2-\delta,1/2+\delta]$, $f|_{I\setminus[1/2,1/2+\delta]}=\al*\al^{-1}|_{ I\setminus[1/2,1/2+\delta]}$ and $\al*\al^{-1}(K_i)\sub U_i$.
If $A\neq\varnothing$, then for every $i\in A$, $f(K_i)\sub U_i$ since $f|_{I\setminus[1/2,1/2+\delta]}=\al*\al^{-1}|_{ I\setminus[1/2,1/2+\delta]}$, $\al*\al^{-1}(K_i)\sub U_i$ and $f([1/2,1/2+\delta])\sub V\sub U_i$.
Therefore $f\in\bigcap_{i=1}^n\lk K_i,U_i\rg\sub q^{-1}(H)$ which implies that $[\al*\bt*\al^{-1}]=[f]\in H$. Hence $[\al*\lambda^{-1}][\lambda*\bt*\lambda^{-1}][\lambda*\al^{-1}]=[\al*\bt*\al^{-1}]\in H$. Since $H$ is a normal subgroup, we have $[\lambda*\bt*\lambda^{-1}]\in [\al*\lambda^{-1}]^{-1}H[\al*\lambda^{-1}] = H$. Therefore $[\lambda](i_*\pi_1(V,y))[\lambda]^{-1}\leq H$.
Since $X$ is locally path connected, we can replace $V$ with a path connected neighborhood $V_y$ .

Put $\U = \{V_y|y \in X\}$. We show that $\pi(\U,x)\leq H$. let $[\al*\bt*\al^{-1}]$ be a generator of $\pi(\U,x)$, where $\bt$ is a loop in $V_y$ for some $y \in X$. Since $V_y$ is path connected, there is a path $\gamma$ in $V_y$ from $y$ to $\al(1)$. Hence $[\al*\bt*\al^{-1}]=[\gamma*\al^{-1}]^{-1}[\gamma*\bt*\gamma^{-1}][\gamma*\al^{-1}] \in H$ which implies that $\pi(\U,x)\leq H$.
\end{proof}

In the following example, we show that locally path connectedness is an essential hypothesis in Theorem 2.1.
\begin{example}
Let $X$ be a subspace of $\mathbb{R}^3$ which is obtained by taking the ``surface'' obtained by rotating the topologists' sine curve about its
limiting arc (see Figure. 1). Put $A=\{(0,0,s) \in \mathbb{R}^3 | -1\leq s \leq1\}$, then $Y=X\backslash A$ is connected, locally path connected and semilocally simply connected. Hence $\pi_1^{qtop}(Y,x)$ is discrete where $x=(\frac{1}{\pi},0,0)$. Therefore $\pi_1^{qtop}(X,x)$ is a discrete topological group which implies that $\{[c_x]\}$ is open. But it is easy to see that for every open cover $\U$ of $X$, $\pi(\U,x)$ is non-trivial (see \cite{FRVZ}).

 \begin{figure}
 \center
  \includegraphics[scale=0.3]{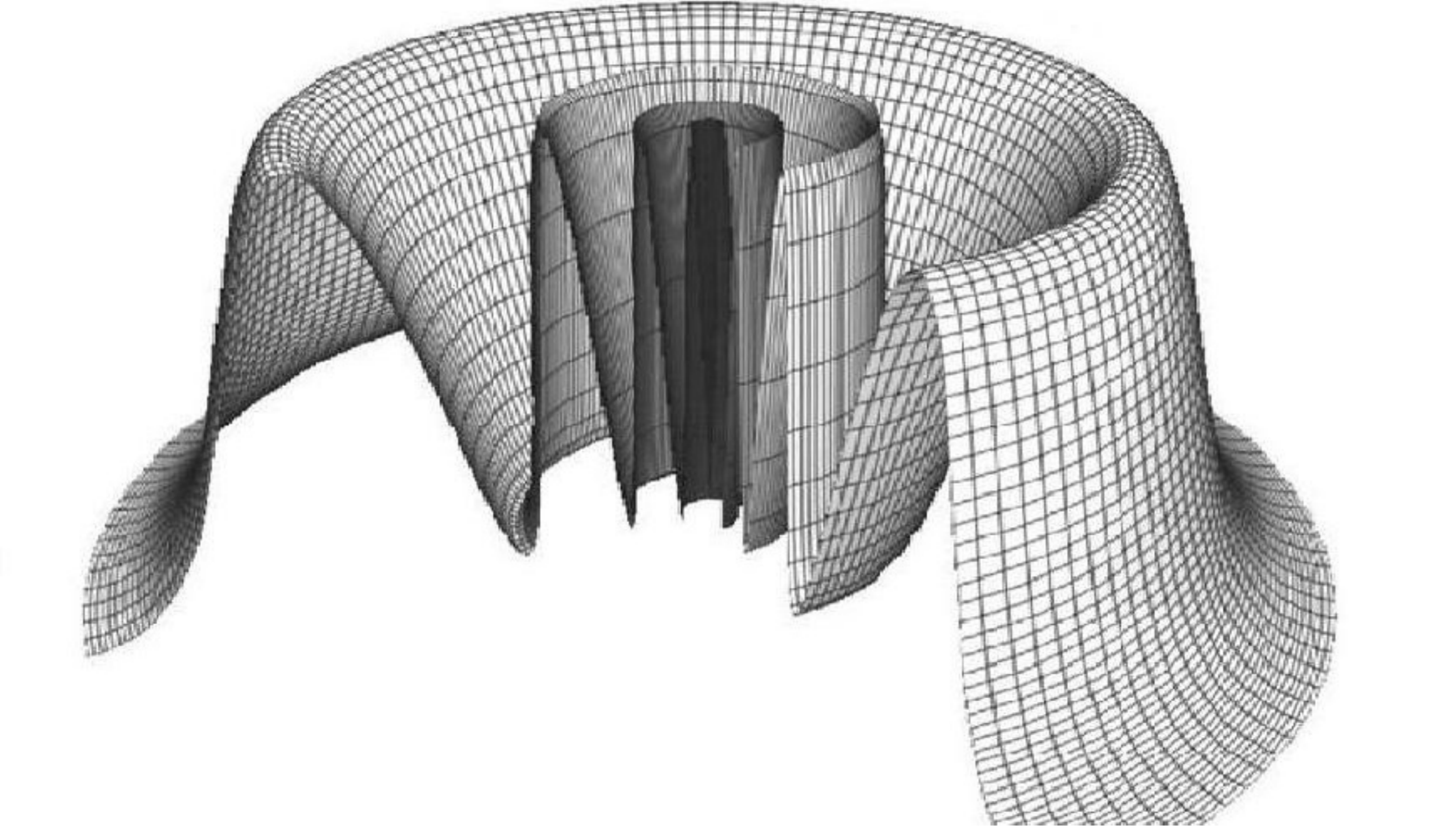}
  \caption{}\label{1}
\end{figure}

\end{example}
We recall from group theory that for any subgroup $H$ of a group $G$, the core of $H$ in $G$, denoted by $H_G$, is defined to be the join of all the normal subgroups of $G$ that are contained in $H$. It is easy to see that  $H_G=\bigcap_{g\in G}g^{-1}Hg$ which is the largest normal subgroup of $G$ contained in $H$.
Using this notion the following corollary is a consequence of Theorem 2.1.
\begin{corollary}
If $X$ is a locally path connected space and $H$ is a subgroup of $\pi_1(X,x)$ such that the core of $H$ in $\pi_1(X,x)$ is open in $\pi_1^{qtop}(X,x)$, then there exists an open cover $\U$ such that $\pi(\U,x)\leq H$.
\end{corollary}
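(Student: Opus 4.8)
The plan is to derive Corollary 2.3 directly from Theorem 2.1 by reducing the statement about an arbitrary subgroup $H$ to the statement about its core $H_{\pi_1(X,x)}$, which is normal. First I would set $N = H_{\pi_1(X,x)}$. By hypothesis $N$ is open in $\pi_1^{qtop}(X,x)$, and by the standard group-theoretic fact recalled just before the corollary, $N$ is a normal subgroup of $\pi_1(X,x)$. Hence $N$ is an open normal subgroup of $\pi_1^{qtop}(X,x)$, so Theorem 2.1 applies to $N$: there exists an open cover $\U$ of $X$ with $\pi(\U,x)\leq N$.

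Next I would simply chain the inclusions. Since $N = H_{\pi_1(X,x)} = \bigcap_{g\in\pi_1(X,x)} g^{-1}Hg \leq H$, combining this with $\pi(\U,x)\leq N$ gives $\pi(\U,x)\leq N \leq H$, which is exactly the desired conclusion. So the proof is essentially one line once Theorem 2.1 is invoked for the core.

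There is no real obstacle here: the only thing to verify is that the hypotheses of Theorem 2.1 are met by $N$, namely that $N$ is genuinely normal in $\pi_1(X,x)$ (immediate from the definition/characterization of the core) and genuinely open in $\pi_1^{qtop}(X,x)$ (this is precisely the hypothesis on $H$ in the corollary). One could remark, if desired, that $N$ being open as a subgroup means its cosets are open, so the continuity/topological considerations of Theorem 2.1 carry over verbatim; but no new argument is needed. Thus the corollary follows immediately by applying Theorem 2.1 to the core of $H$ and using $H_{\pi_1(X,x)}\leq H$.
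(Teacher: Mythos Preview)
Your proposal is correct and matches the paper's intended argument: the corollary is stated without proof as an immediate consequence of Theorem 2.1, and applying Theorem 2.1 to the open normal subgroup $N=H_{\pi_1(X,x)}$ and then using $N\leq H$ is exactly the one-line deduction the paper has in mind.
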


\section{A classification of covering spaces}

In order to present a suitable classification of covering spaces we introduce the following concepts.
\begin{definition}
Let $(X,x)$ be a pointed space. By a {\em path open cover} of  $(X,x)$ we mean an open cover $\V=\{V_{\alpha} | \alpha \in P(X,x)\}$ of $X$ such that $\alpha(1) \in V_{\alpha}$ for every $\alpha \in P(X,x) $. We also define the {\em path Spanier group} $\wt{\pi}(\V, x)$ with respect to the path open cover $\V$ to be the subgroup of $\pi_1(X, x)$ which contains all homotopy classes having representatives of the following
type:
$$\prod\limits_{j=1}^{n}u_jv_ju_j^{-1},$$
where $u_j$ are arbitrary paths starting at $x$ and each $v_j$ is a loop inside $V_{u_j}$ for all $i \in\{1,2,...,n\}$.
\end{definition}

The following theorem gives an important property of the path Spanier groups.
\begin{theorem}
Let $(X,x)$ be a locally path connected pointed space and $\V=\{V_{\alpha} | \alpha \in P(X,x)\}$ be a path open cover of $(X,x)$. Then $\wt{\pi}(\V, x)$ is an open subgroup of $\pi_1^{qtop}(X,x)$.
\end{theorem}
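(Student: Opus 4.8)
The plan is to show that $\wt{\pi}(\V,x)$ is open in $\pi_1^{qtop}(X,x)$ by exhibiting, for each element $[\gamma]\in\wt{\pi}(\V,x)$, an open neighborhood of $[\gamma]$ in $\pi_1^{qtop}(X,x)$ that lies inside $\wt{\pi}(\V,x)$; equivalently, it suffices to show that $q^{-1}(\wt{\pi}(\V,x))$ is open in $\Omega(X,x)$, where $q:\Omega(X,x)\lo\pi_1(X,x)$ is the quotient map. So first I would pick a loop $\gamma\in q^{-1}(\wt{\pi}(\V,x))$, so that $[\gamma]=\prod_{j=1}^n u_jv_ju_j^{-1}$ with each $v_j$ a loop in $V_{u_j}$. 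The goal is to produce a basic compact-open neighborhood $\bigcap_{i=1}^m\lk K_i,U_i\rg$ of $\gamma$ contained in $q^{-1}(\wt{\pi}(\V,x))$.

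The key idea is a path-tracking/subdivision argument along $\gamma$. Subdivide $I$ by $0=t_0<t_1<\cdots<t_m=1$ finely enough that each $\gamma([t_{k-1},t_k])$ lies in a path connected open set; more precisely, for each $k$ let $\sigma_k=\gamma|_{[0,t_{k-1}]}$ (reparametrized to $I$) be the ``partial path'' reaching the start of the $k$-th piece, and choose a path connected open neighborhood $W_k$ of $\gamma(t_{k-1})$ contained in $V_{\sigma_k}$ — this uses local path connectedness of $X$ together with the defining property $\gamma(t_{k-1})=\sigma_k(1)\in V_{\sigma_k}$ of a path open cover. Refining the subdivision if necessary (by uniform continuity of $\gamma$) we may also arrange $\gamma([t_{k-1},t_k])\subseteq W_k$. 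I would then take the basic compact-open set $N=\bigcap_{k=1}^m\lk [t_{k-1},t_k],W_k\rg$, which is an open neighborhood of $\gamma$ in $\Omega(X,x)$.

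Now I claim $N\subseteq q^{-1}(\wt{\pi}(\V,x))$. Given any $\delta\in N$, write $\delta$ as the concatenation of its restrictions $\delta_k=\delta|_{[t_{k-1},t_k]}$, each of which is a path inside $W_k$. Using the partial paths of $\delta$ (or, more conveniently, fixed ``spokes'': for each $k$ pick a path $a_k$ inside $W_k$ from $\gamma(t_{k-1})$ to $\delta(t_{k-1})$, which exists since $W_k$ is path connected and both endpoints lie in $W_k$), one telescopes: $[\delta]$ equals a product whose $k$-th factor has the form (partial path of $\delta$ to level $k-1$) followed by a loop supported in $W_k\subseteq V_{\sigma_k}$ — after correcting $\sigma_k$ to the actual partial path of $\delta$ by an element of the form $u\,\ell\,u^{-1}$ with $\ell$ a loop in $W_k$, which is again a generator of $\wt{\pi}(\V,x)$ once one notes $W_k\subseteq V_{(\text{partial path of }\delta)}$ may fail, so one instead absorbs the discrepancy into a product of such generators. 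Carrying this out shows $[\delta]\in\wt{\pi}(\V,x)$, hence $N\subseteq q^{-1}(\wt{\pi}(\V,x))$ and $q^{-1}(\wt{\pi}(\V,x))$ is open; since $\wt{\pi}(\V,x)$ is a subgroup and the quotient map is open on $q^{-1}$ of a subgroup in a quasitopological group, $\wt{\pi}(\V,x)$ is an open subgroup.

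The main obstacle I anticipate is the bookkeeping in the last step: the partial paths $\sigma_k$ of the reference loop $\gamma$ and the partial paths of a nearby loop $\delta$ are different, so the naive claim ``$\delta_k$ is a loop based at $\sigma_k(1)$'' is false, and one must carefully insert correction loops lying in the $W_k$'s and check that each correction, conjugated by the appropriate initial path, is still a \emph{generator} of the path Spanier group (i.e.\ that the relevant conjugating path $u$ has $V_u\supseteq$ the little open set in which the correction loop lives). Getting the indexing of the path open cover to line up with these initial segments — and verifying that all the loops introduced genuinely sit inside the prescribed $V_{u}$'s — is the delicate point; the local path connectedness hypothesis is exactly what makes it go through.
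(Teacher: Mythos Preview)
Your overall strategy matches the paper's: show $q^{-1}(\wt\pi(\V,x))$ is open by subdividing $I$ along a given loop $\gamma$, building a compact--open neighborhood, and telescoping any nearby loop $\delta$ against $\gamma$. The gap is in the neighborhood you choose. With $N=\bigcap_k\lk[t_{k-1},t_k],W_k\rg$ alone, a correction path from $\gamma(t_k)$ to $\delta(t_k)$ can be taken in $W_k$ or in $W_{k+1}$, but there is no reason it can be taken in both. Yet the $k$-th telescoping factor $[\sigma_k*(a_k*\delta_k*a_{k+1}^{-1}*\gamma_k^{-1})*\sigma_k^{-1}]$ needs $a_{k+1}\sub W_k$, while the $(k{+}1)$-th factor needs the \emph{same} $a_{k+1}\sub W_{k+1}$. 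Your proposed remedy of absorbing the discrepancy into extra generators does not work: if you use two different correction paths $a_{k+1}\sub W_k$ and $a_{k+1}'\sub W_{k+1}$, the leftover loop $a_{k+1}*(a_{k+1}')^{-1}$ lives only in $W_k\cup W_{k+1}$, not in any single $V_u$, so it is not a generator of $\wt\pi(\V,x)$. (The worry you raise about whether $W_k\sub V_{(\text{partial path of }\delta)}$ is a red herring: the conjugating paths in the factorization are always the partial paths $\sigma_k$ of the fixed reference loop $\gamma$, never of $\delta$.)

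The paper closes this gap by adding point constraints to the neighborhood. Let $U_k$ be the path component of $V_{\sigma_k}\cap V_{\sigma_{k+1}}$ (equivalently, of $W_k\cap W_{k+1}$) containing $\gamma(t_k)$; local path connectedness makes each $U_k$ open. Replace $N$ by $N\cap\bigcap_{k}\lk\{t_k\},U_k\rg$. This is still a basic compact--open neighborhood of $\gamma$, and now any $\delta$ in it has $\delta(t_k)\in U_k$, so the single correction path at $t_k$ can be chosen inside $U_k\sub W_k\cap W_{k+1}$. With that one adjustment your telescoping argument goes through exactly as in the paper.
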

\begin{proof}
It suffices to show that $q^{-1}(\wt{\pi}(\V, x))$ is an open subset of $\Omega(X,x)$, where $q:\Omega(X,x)\lo\pi_1(X,x)$ is the quotient map defined by $q(\al)=[\al]$. Let $\al\in q^{-1}(\wt{\pi}(\V, x))$. For every $t\in I$, put $\alpha_t = \alpha \circ \gamma$, where $\gamma:[0,1]\lo [0,t]$ is defined by $\gamma(s)=ts$ for every $s\in I$. The sets $\al^{-1}(V_{\alpha_t})$, $t\in I$, form an open cover of $I$. Since $I$ is compact and $t\in \al^{-1}(V_{\alpha_t})$ for every $t\in I$, there is a finite set $\{a_0=0 , a_1 , ... ,a_n=1\}\sub I $ such that for each $1\leq n\leq N$, $I_n=[a_{n-1},a_n]\sub \al^{-1}(V_{\alpha_{t_n}})$ for some $t_n\in I_n$. Note that for every $n\in\{1,2,...,N\}$, $[\alpha_{a_{n-1}}](i_*\pi_1(V_{\alpha_{t_n}},\alpha(a_{n-1})))[\alpha_{a_{n-1}}]^{-1} =[\alpha_{t_n}](i_*\pi_1(V_{\alpha_{t_n}},\alpha(t_n)))[\alpha_{t_n}]^{-1}\leq \wt{\pi}(\V, x)$. For every $1\leq n\leq N$, put $V_n = V_{\alpha_{t_n}}$ and for every $1\leq n\leq {N-1}$ consider $U_n$ to be the path component of $V_n\cap V_{n+1}$ containing $\al(a_n)$, so $$\al(a_n)\in U_n\sub V_n\cap V_{n+1}\sub X.$$
Since $X$ is locally path connected, $U_n$ is open. Consider the basic open set $$\mathcal{W}=\left(\bigcap_{n=1}^N\lk I_n,V_n\rg\right)\cap\left(\bigcap_{n=1}^{N-1}\lk\{a_n\},U_n\rg\right)\sub\Omega(X,x).$$
By construction, $\al\in\mathcal{W}$. It remains to show that $\mathcal{W}\sub q^{-1}(\wt{\pi}(\V, x))$. Let $\bt\in\mathcal{W}$, then
by definition of $\mathcal{W}$ we have $\bt(I_n)\sub V_n$, for each $1\leq n\leq N$, and $\bt(a_n)\in U_n$, for each $1\leq n\leq N-1$.
 For every $1\leq n\leq N$, put $\bt_n=\bt\circ\vf_n$, where $\vf_n:I\lo I_n$ is a linear homeomorphism such that $\vf_n(0)=a_{n-1}$. Also, for every $1\leq n\leq N-1$, let $\la_n:I\lo U_n$ be a path from $\al(a_n)$ to $\bt(a_n)$ by path connectivity of $U_n$.
By definitions of $\bt_n$ and $\la_n$'s, $\bt_1*\la_1^{-1}*\al_{a_1}^{-1}$ is a loop in $V_1$ based at $x$ and hence $[\bt_1*\la_1^{-1}*\al_{a_1}^{-1}]\in\wt{\pi}(\V, x)$ since $[\alpha_0](i_*\pi_1(V_1,\alpha(0)))[\alpha_0]^{-1}\leq \wt{\pi}(\V, x)$. Also, $\la_1*\bt_2*\la_2^{-1}*(\alpha\circ\vf_2)^{-1}$ is a loop in $V_2$ based at $\al(a_1)$ and hence $[\al_{a_1}*\la_1*\bt_2*\la_2^{-1}*\alpha_
{a_2}^{-1}]\in\wt{\pi}(\V, x)$.
Since $\wt{\pi}(\V, x)$ is a subgroup, we have $$[\bt_1*\bt_2*\la_2^{-1}*\al_{a_2}^{-1}]=[\bt_1*\la_1^{-1}*\al_{a_1}^{-1}][\al_{a_1}*\la_1*\bt_2*\la_2^{-1}*\alpha_
{a_2}^{-1}]\in\wt{\pi}(\V, x).$$
Similarly, $$g_1=[\beta_1*\beta_2*...*\beta_{N-1}*\la_{N-1}^{-1}*\alpha_{a_{N-1}}^{-1}]\in\wt{\pi}(\V, x).$$ Also, since $\la_{N-1}*\bt_N*(\alpha\circ\vf_{N-1})^{-1}$ is a loop at $\al(a_{N-1})$ in $V_N$, $g_2=[\al_{a_{N-1}}*\la_{N-1}*\bt_N*\alpha_1^{-1}]\in\wt{\pi}(\V, x)$.
Therefore
$$[\bt*\al^{-1}]=[\beta_1*\beta_2*...*\beta_{N-1}*\beta_N*\alpha_1^{-1}]=g_1.g_2\in\wt{\pi}(\V, x)$$
which implies that $[\bt]\in\wt{\pi}(\V, x)$ since $[\al]\in\wt{\pi}(\V, x)$.
\end{proof}

Using the above theorem, we can give a necessary and sufficient condition for a subgroup of $\pi_1(X,x)$ to be open in $\pi_1^{qtop}(X,x)$.
\begin{corollary}
 If $X$ is locally path connected, then for every subgroup $H$ of $\pi_1^{qtop}(X,x)$, $H$ is open if and only if there exists a path open cover $\V=\{V_{\alpha} | \alpha \in P(X,x)\}$ of $X$ such that $\wt{\pi}(\V, x)\leq H$.
\end{corollary}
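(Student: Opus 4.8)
The plan is to prove the two implications separately; the ``if'' direction is a soft consequence of Theorem 3.3, while the ``only if'' direction is a localized, ``per-path'' version of the argument used for Theorem 2.1. For the ``if'' direction, suppose $\wt\pi(\V,x)\le H$ for some path open cover $\V$ of $X$. By Theorem 3.3, $\wt\pi(\V,x)$ is an open subgroup of $\pi_1^{qtop}(X,x)$. Since $\pi_1^{qtop}(X,x)$ is a quasitopological group, every left translation is a self-homeomorphism, so each left coset $h\,\wt\pi(\V,x)$ (with $h\in H$) is open; as $H=\bigcup_{h\in H}h\,\wt\pi(\V,x)$ is a union of such cosets, $H$ is open in $\pi_1^{qtop}(X,x)$.

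For the ``only if'' direction, assume $H$ is open, so $q^{-1}(H)$ is open in $\Omega(X,x)$, where $q:\Omega(X,x)\lo\pi_1(X,x)$ is the quotient map. I would produce, for each $\alpha\in P(X,x)$, a path connected open neighborhood $V_\alpha$ of $\alpha(1)$ with $[\alpha]\,(i_*\pi_1(V_\alpha,\alpha(1)))\,[\alpha]^{-1}\le H$, and then set $\V=\{V_\alpha\mid\alpha\in P(X,x)\}$. Since $X$ is locally path connected (and connected, hence path connected), every point of $X$ is the endpoint of some $\alpha\in P(X,x)$, so $\V$ is genuinely a path open cover; and for any generator $\prod_{j=1}^{n}u_jv_ju_j^{-1}$ of $\wt\pi(\V,x)$ one has $[u_jv_ju_j^{-1}]=[u_j]\,i_*[v_j]\,[u_j]^{-1}\in[u_j]\,(i_*\pi_1(V_{u_j},u_j(1)))\,[u_j]^{-1}\le H$, so $\wt\pi(\V,x)\le H$. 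To build $V_\alpha$: the loop $\al*\al^{-1}$ represents $[e_x]\in H$, hence $\al*\al^{-1}\in q^{-1}(H)$, so there is a basic open neighborhood $\bigcap_{i=1}^n\lk K_i,U_i\rg\sub q^{-1}(H)$ of $\al*\al^{-1}$. Now I would rerun the construction from the first paragraph of the proof of Theorem 2.1 verbatim (splitting the indices into $A$ and $B$ according to whether $1/2\in K_i$, choosing $\delta_1,\delta_2$, forming $V=\bigcap_{i\in A}U_i$, and splicing an arbitrary loop $\bt$ in $V$ at $\al(1)$ into $\al*\al^{-1}$ around the parameter $1/2$ to get a loop $f$ with $[f]=[\al*\bt*\al^{-1}]$ and $f\in\bigcap_{i=1}^n\lk K_i,U_i\rg$): this yields $[\al*\bt*\al^{-1}]\in H$ for every loop $\bt$ in $V$, i.e. $[\al]\,(i_*\pi_1(V,\al(1)))\,[\al]^{-1}\le H$, and local path connectedness lets us shrink $V$ to the desired path connected open $V_\alpha\ni\al(1)$.

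The one point requiring care is that Theorem 2.1 is stated for \emph{normal} $H$, whereas here $H$ need not be normal. But normality is used in that proof solely to upgrade the conclusion $[\al]\,(i_*\pi_1(V,\al(1)))\,[\al]^{-1}\le H$ from a single chosen path $\al$ to \emph{all} paths with the same endpoint; in the present setting we never need that upgrade, because the index set of a path open cover is $P(X,x)$ itself, so attaching the tailored neighborhood $V_\alpha$ to each individual path $\alpha$ is exactly what Definition 3.1 demands. Hence only the normality-free part of the proof of Theorem 2.1 is invoked, and this per-path flexibility is precisely what makes the equivalence work for arbitrary (not necessarily normal) open subgroups. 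I expect the remaining obstacles to be purely cosmetic: keeping track of the basepoints of the loops $v_j$, and noting that $X$ must be path connected for $\V$ to cover $X$ at all (automatic from connectedness plus local path connectedness, the standing hypotheses of this section).
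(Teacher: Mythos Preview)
Your proposal is correct and follows essentially the same approach as the paper: the ``if'' direction uses the openness of $\wt\pi(\V,x)$ (Theorem~3.2---your reference ``Theorem~3.3'' is a misnumbering) together with the coset decomposition (which the paper packages as Remark~3.4), and the ``only if'' direction invokes the normality-free part of the proof of Theorem~2.1 to produce the per-path neighborhoods $V_\alpha$. Your explicit observation that normality is used in Theorem~2.1 only to pass from one path to all paths with the same endpoint, and that this step is unnecessary here because the cover is indexed by $P(X,x)$, is exactly the point the paper leaves implicit.
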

\begin{proof}
Let $H$ be an open subgroup of $\pi_1^{qtop}(X,x)$. By the proof of Theorem 2.1, for every path $\alpha$ in $X$ from $x$ to any point $y$, there is an open set $V_{\alpha}$ of $\alpha(1)=y$ in $X$ such that $[\alpha](i_*\pi_1(V,y))[\alpha]^{-1}\leq H$. Hence by putting $\V=\{V_{\alpha} | \alpha \in P(X,x)\}$ as a path open cover of $X$ we have $\wt{\pi}(\V, x)\leq H$.
 The Converse follows from Theorem 3.2.
\end{proof}
\begin{remark}
Note that if $G$ is a quasitopological group and $H\leq K\leq G$ and $H$ is open in $G$, then $K$ is also an open subgroup of $G$ since every translation in $G$ is a homeomorphism.
\end{remark}

Now, we can show that the Spanier groups are open subgroups.
\begin{corollary}
If $X$ is a locally path connected space and $\U$ is an open cover of $X$, then $\pi(\U,x)$ is an open subgroup of $\pi_1^{qtop}(X,x)$.
\end{corollary}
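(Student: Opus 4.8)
The plan is to deduce this from Theorem 3.2 and Remark 3.4 by manufacturing, out of the ordinary open cover $\U$, a path open cover $\V$ of $(X,x)$ whose path Spanier group is contained in $\pi(\U,x)$.

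First I would construct $\V$. Given $\alpha\in P(X,x)$, since $\U$ covers $X$ there is some $U_{\alpha}\in\U$ with $\alpha(1)\in U_{\alpha}$; let $V_{\alpha}$ be the path component of $U_{\alpha}$ containing $\alpha(1)$. Because $X$ is locally path connected, $V_{\alpha}$ is open, it contains $\alpha(1)$, and $V_{\alpha}\subseteq U_{\alpha}$. Hence $\V=\{V_{\alpha}\mid\alpha\in P(X,x)\}$ is a path open cover of $(X,x)$ in the sense of Definition 3.1.

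Second I would check that $\wt{\pi}(\V,x)\leq\pi(\U,x)$. A generator of $\wt{\pi}(\V,x)$ has a representative $\prod_{j=1}^{n}u_jv_ju_j^{-1}$ in which each $v_j$ is a loop inside $V_{u_j}$; since $V_{u_j}\subseteq U_{u_j}\in\U$, each $v_j$ is in particular a loop lying inside a member of $\U$, so this product is exactly of the type generating $\pi(\U,x)$. Thus every generator of $\wt{\pi}(\V,x)$ lies in $\pi(\U,x)$, which gives the inclusion.

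Finally, Theorem 3.2 tells us that $\wt{\pi}(\V,x)$ is an open subgroup of $\pi_1^{qtop}(X,x)$; since $\pi_1^{qtop}(X,x)$ is a quasitopological group and $\wt{\pi}(\V,x)\leq\pi(\U,x)\leq\pi_1^{qtop}(X,x)$, Remark 3.4 (applied with $H=\wt{\pi}(\V,x)$ and $K=\pi(\U,x)$) shows that $\pi(\U,x)$ is open, as desired. I do not expect a genuine obstacle here: the only points requiring care are that local path connectedness is precisely what makes each $V_{\alpha}$ open, and the elementary observation that passing from the finer cover $\V$ to the coarser cover $\U$ can only enlarge the associated Spanier group, since a loop inside $V_{\alpha}$ is automatically a loop inside the member of $\U$ containing $V_{\alpha}$.
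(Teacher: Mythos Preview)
Your proof is correct and follows essentially the same route as the paper: build a path open cover $\V$ with each $V_\alpha$ contained in some member of $\U$, observe $\wt{\pi}(\V,x)\leq\pi(\U,x)$, and invoke Theorem 3.2 together with Remark 3.4. The paper's version is marginally simpler in that it sets $V_\alpha=U_{\alpha(1)}$ directly (choosing one $U_y\in\U$ for each endpoint $y$) without passing to path components, so your use of local path connectedness at that step is unnecessary---but harmless.
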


\begin{proof}
For every $y\in X$ let $U_y$ be an element of cover $\U$ involve $y$. Consider $V_{\alpha} = U_y$ for every $\alpha \in P(X,x)$ with $\alpha(1)=y$. Hence $\wt{\pi}(\V, x) \leq \pi(\U,x)$ where $\V=\{V_{\alpha} | \alpha \in P(X,x)\}$. Therefore $\pi(\U,x)$ is open since $\wt{\pi}(\V, x)$ is an open subgroup.
\end{proof}

Now we are in a position to give a necessary and sufficient condition for a subgroup of the fundamental group $X$ to contain a Spanier group of $X$.
\begin{corollary}
If $X$ is locally path connected and $H$ is a subgroup of $\pi_1(X,x)$, then there exists an open cover $\U$ such that $\pi(\U,x)\leq H$ if and only if the core of $H$ in $\pi_1(X,x)$ is open in $\pi_1^{qtop}(X,x)$.
\end{corollary}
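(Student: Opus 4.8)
The plan is to obtain this corollary formally from the material already developed, so the proof is short. The forward implication (existence of $\U$ with $\pi(\U,x)\leq H$ implies the core of $H$ is open) rests on two observations. First, the Spanier group $\pi(\U,x)$ is a \emph{normal} subgroup of $\pi_1(X,x)$: a typical generator is a class $[\prod_{j=1}^{n}u_jv_ju_j^{-1}]$ with each $u_j$ a path starting at $x$ and each $v_j$ a loop in some member of $\U$, and conjugating it by an arbitrary $[g]\in\pi_1(X,x)$ produces $[\prod_{j=1}^{n}(g*u_j)v_j(g*u_j)^{-1}]$, which is again of the generating type because each $g*u_j$ is still a path starting at $x$. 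Second, $\pi(\U,x)$ is an open subgroup of $\pi_1^{qtop}(X,x)$ by Corollary 3.5.

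Granting these, the argument runs as follows. Since $\pi(\U,x)$ is a normal subgroup of $\pi_1(X,x)$ contained in $H$, it is contained in the largest normal subgroup of $\pi_1(X,x)$ lying in $H$, i.e. $\pi(\U,x)\leq H_{\pi_1(X,x)}\leq\pi_1(X,x)$. Now $\pi(\U,x)$ is open in $\pi_1^{qtop}(X,x)$, so by Remark 3.4 every subgroup lying between $\pi(\U,x)$ and $\pi_1(X,x)$ is open in $\pi_1^{qtop}(X,x)$; in particular $H_{\pi_1(X,x)}$ is open in $\pi_1^{qtop}(X,x)$, which is what we want.

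The converse implication is immediate: if $H_{\pi_1(X,x)}$ is open in $\pi_1^{qtop}(X,x)$, then $H$ satisfies exactly the hypothesis of Corollary 2.3, which produces an open cover $\U$ of $X$ with $\pi(\U,x)\leq H_{\pi_1(X,x)}\leq H$. (Alternatively, one may quote Theorem 2.1 directly, since $H_{\pi_1(X,x)}$ is by construction an open \emph{normal} subgroup of $\pi_1^{qtop}(X,x)$.)

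I do not expect any real obstacle here; all the topological work has already been carried out in Theorems 2.1 and 3.2 and their corollaries. The only point that genuinely needs to be recorded is the normality of $\pi(\U,x)$ in $\pi_1(X,x)$, established by the elementary conjugation computation above; once that is in place the corollary follows purely formally from Corollary 2.3, Corollary 3.5, and Remark 3.4 together with the characterization of the core $H_{\pi_1(X,x)}$ as the largest normal subgroup of $\pi_1(X,x)$ contained in $H$.
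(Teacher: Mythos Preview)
Your proof is correct and follows essentially the same route as the paper: for the forward direction the paper also uses normality of $\pi(\U,x)$ to place it inside the core $H_{\pi_1(X,x)}$ and then invokes Remark 3.4 (together with the openness of $\pi(\U,x)$ from Corollary 3.5) to conclude the core is open, while the converse is likewise obtained from Corollary 2.3. The only difference is cosmetic: you spell out the conjugation argument for normality of $\pi(\U,x)$ and cite Corollary 3.5 explicitly, whereas the paper leaves both implicit.
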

\begin{proof}
Let $\U$ be an open cover of $X$ such that $\pi(\U,x)\leq H$. Since $\pi(\U,x)$ is a normal subgroup of $\pi_1(X,x)$ we have $\pi(\U,x)\leq \bigcap_{g\in \pi_1(X,x)}g^{-1}Hg$. Hence by Remark 3.4, $H_{\pi_1(X,x)}=\bigcap_{g\in \pi_1(X,x)}g^{-1}Hg$ is an open subgroup of $\pi_1^{qtop}(X,x)$. The converse statement holds by Corollary 2.3.
\end{proof}

The following classification of connected coverings of locally path connected spaces is a consequence of Corollary 3.6 and Theorem 1.1 which is the main result of this section.
\begin{theorem}
For a connected, locally path connected space $X$, there
is a 1-1 correspondence between its equivalent classes of connected covering spaces and the cojugacy classes of subgroups of
its fundamental group $\pi_1(X,x)$, with open core in $\pi_1^{qtop}(X,x)$.
\end{theorem}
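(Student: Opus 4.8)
The plan is to build the correspondence exactly as in classical covering space theory and then use the results of Sections~1--3 to pin down which subgroups occur. To a pointed connected covering $p\colon(\wt X,\ti x)\to(X,x)$ I assign the subgroup $\pst\le\pi_1(X,x)$; moving $\ti x$ within the fibre $p^{-1}(x)$ replaces $\pst$ by a conjugate, so each equivalence class of connected coverings of $X$ determines a well-defined conjugacy class of subgroups of $\pi_1(X,x)$. Three things must then be verified: (i) every subgroup $\pst$ arising this way has open core in $\pi_1^{qtop}(X,x)$; (ii) every subgroup $H$ of $\pi_1(X,x)$ with open core in $\pi_1^{qtop}(X,x)$ arises in this way; and (iii) the resulting assignment is a bijection between the two sets of classes.

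For (i), I would argue as follows. Let $p\colon\wt X\to X$ be a connected covering. Since $X$ is locally path connected, choose an open cover $\U$ of $X$ consisting of path connected open sets each of which is evenly covered by $p$ (a path connected open neighbourhood inside an evenly covered one is again evenly covered). Then $\pi(\U,x)\le\pst$: if $v$ is a loop in some $U\in\U$ based at $u(1)$, where $u$ is a path starting at $x$, lift $u$ to a path $\ti u$ from $\ti x$ and then lift $v$ from $\ti u(1)$; because $U$ is evenly covered and $v$ stays in $U$, the lift of $v$ stays inside the single sheet over $U$ containing $\ti u(1)$ and is therefore a loop, so $\ti u*\ti v*\ti u^{-1}$ is a loop at $\ti x$ and $[u*v*u^{-1}]\in\pst$; hence every generator of $\pi(\U,x)$, and so $\pi(\U,x)$ itself, lies in $\pst$. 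As $\pi(\U,x)$ is a normal subgroup of $\pi_1(X,x)$, it is contained in the core $(\pst)_{\pi_1(X,x)}$. By Corollary~3.5 the subgroup $\pi(\U,x)$ is open in $\pi_1^{qtop}(X,x)$, so by Remark~3.4 the intermediate subgroup $(\pst)_{\pi_1(X,x)}$ is open in $\pi_1^{qtop}(X,x)$ as well.

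For (ii), given $H\le\pi_1(X,x)$ whose core is open in $\pi_1^{qtop}(X,x)$, Corollary~3.6 provides an open cover $\U$ of $X$ with $\pi(\U,x)\le H$, and then Spanier's Theorem~1.1 produces a connected covering $p\colon(\wt X,\ti x)\to(X,x)$ with $\pst=H$. For (iii) I use the classical uniqueness theory over a connected, locally path connected base: a covering space of $X$ is again connected and locally path connected, so the lifting criterion applies, and two pointed connected coverings with equal associated subgroups admit base point preserving covering maps in both directions whose composites fix the base points and are therefore identities by unique lifting, giving an equivalence. Passing to unpointed coverings, $\wt X_1$ and $\wt X_2$ are equivalent if and only if $(p_1)_*\pi_1(\wt X_1,\ti x_1)$ and $(p_2)_*\pi_1(\wt X_2,\ti x_2)$ are conjugate in $\pi_1(X,x)$; in particular the covering built in (ii) from a conjugate of $H$ is equivalent to the one built from $H$, so the inverse assignment is well defined on conjugacy classes. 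Combining (i)--(iii) yields the stated $1$-$1$ correspondence.

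I expect the crux to be step (i), i.e.\ showing that $\pst$ has \emph{open core} rather than merely that $\pst$ is open. Its content is the classical fact that a covering subgroup contains the Spanier group of its cover by evenly covered sets, together with the observation that this Spanier group is normal and --- by Corollary~3.5, which is exactly where local path connectedness is genuinely used --- open in $\pi_1^{qtop}(X,x)$. This is precisely the point at which ``open core'' appears as the right condition and Biss's classification by open subgroups breaks down, since in general an open subgroup need not have open core.
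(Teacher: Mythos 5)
Your proposal is correct and follows essentially the same route as the paper's proof: Corollary~3.6 together with Spanier's Theorem~1.1 produces the covering from a subgroup with open core, the cover by evenly covered open sets shows conversely that every covering subgroup contains a (normal, open) Spanier group and hence has open core, and the classical conjugacy/equivalence theory over a connected, locally path connected base gives the bijection on classes. Your write-up merely fills in details the paper leaves implicit (the lifting argument for $\pi(\U,x)\le\pst$ and the explicit appeal to Corollary~3.5 and Remark~3.4, which is exactly the content of the paper's citation of Corollary~3.6).
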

\begin{proof}
For a connected, locally path connected space $X$, if $H$ is a subgroup of $\pi_1(X,x)$ with open core, then  by Corollary 3.6 there exists an open cover $\U$ such that $\pi(\U,x)\leq H$. Thus by Theorem 1.1 there exists a covering $\pc$ such that $p_*\pi_1(\wt{X},\ti{x})=H$. Note that if there is another connected covering $q:\wt{Y}\rightarrow Y$, then by classical results in coverings, $q$ and $p$ are equivalent if and only if $q_*\pi_1(\wt{Y},\wt{y})=g^{-1}Hg$ for some $g\in \pi_1(X,x)$. Moreover, if there exists a covering $\pc$ such that $p_*\pi_1(\wt{X},\ti{x})=H$, then by choosing $\U$ consists of evenly covered open subsets of $X$ we have $\pi(\U,x)\leq H$. Hence by Corollary 3.6 the core of $H$ is open in $\pi_1^{qtop}(X,x)$.
\end{proof}

The following two corollaries are immediate consequences of the proof of Theorem 3.7.
\begin{corollary}
If $X$ is connected, locally path connected and $H$ is a subgroup of $\pi_1(X,x)$ with open core in $\pi_1^{qtop}(X,x)$, then there exists a covering $\pc$ such that $p_*\pi_1(\wt{X},\ti{x})=H$.
\end{corollary}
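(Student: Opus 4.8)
The plan is to obtain this statement as a direct consequence of the work already done, exactly as in the proof of Theorem 3.7: first convert the hypothesis on the core of $H$ into the existence of a suitable open cover by means of Corollary 3.6, and then feed that cover into Spanier's classical existence theorem, Theorem 1.1. No new construction is required.

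Concretely, I would argue as follows. Since $X$ is locally path connected and, by hypothesis, the core $H_{\pi_1(X,x)}$ of $H$ in $\pi_1(X,x)$ is an open subgroup of $\pi_1^{qtop}(X,x)$, Corollary 3.6 (equivalently, Corollary 2.3 applied to $H$, whose core is open) yields an open cover $\U$ of $X$ with $\pi(\U,x)\leq H$. Now $X$ is connected and locally path connected, so Theorem 1.1 applies to $\U$ and $H$ and produces a covering $\pc$ with $\pst=H$; the covering furnished by Spanier's construction is connected, which gives the desired conclusion.

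The only thing worth flagging is that this is essentially a matter of chaining results together: the genuine content is the equivalence between ``the core of $H$ is open in $\pi_1^{qtop}(X,x)$'' and ``$H$ contains a Spanier group $\pi(\U,x)$'', which is Corollary 3.6 and rests on Theorem 2.1 (hence on local path connectedness). I do not expect a real obstacle here; the one point to be careful about is that Theorem 1.1 is phrased in terms of an open cover rather than directly in terms of an openness condition on $H$, so one must pass through Corollary 3.6 to move between the two formulations rather than attempting to apply Theorem 1.1 to $H$ directly.
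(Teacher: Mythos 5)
Your argument is exactly the one the paper intends: the paper derives this corollary as an immediate consequence of the proof of Theorem 3.7, whose first half is precisely your chain Corollary 3.6 (via Corollary 2.3) $\Rightarrow$ open cover $\U$ with $\pi(\U,x)\leq H$ $\Rightarrow$ Theorem 1.1. The proposal is correct and takes essentially the same approach.
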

\begin{corollary}
If $X$ is connected,locally path connected space and there exists a covering $\pc$ such that $p_*\pi_1(\wt{X},\ti{x})=H$, then $H$ is open in $\pi_1^{qtop}(X,x)$.
\end{corollary}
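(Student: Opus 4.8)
The plan is to reduce the corollary to the already-established fact that Spanier groups are open subgroups (Corollary 3.5), by exhibiting an open cover of $X$ whose Spanier group is contained in $H$.

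First I would fix a covering $\pc$ with $\pst=H$ and let $\U$ be an open cover of $X$ consisting of evenly covered open sets, which exists by the very definition of a covering map. The key step is to verify that $\pi(\U,x)\leq H$. Take a generator $\prod_{j=1}^{n}u_jv_ju_j^{-1}$ of $\pi(\U,x)$, where each $u_j$ is a path at $x$ and each $v_j$ is a loop lying inside some evenly covered $U_j\in\U$. Lift $u_j$ to the path $\ti{u}_j$ in $\wt{X}$ starting at $\ti{x}$. Since $U_j$ is evenly covered, $p^{-1}(U_j)$ is a disjoint union of sheets, each mapped homeomorphically onto $U_j$; as $\ti{u}_j(1)$ lies over $u_j(1)=v_j(0)\in U_j$, it belongs to exactly one such sheet $S$, and transporting $v_j$ through the homeomorphism $p|_S$ produces the lift of $v_j$ starting at $\ti{u}_j(1)$, which is again a loop because $v_j$ is. Hence the lift of $u_jv_ju_j^{-1}$ starting at $\ti{x}$ is a loop at $\ti{x}$, so $[u_jv_ju_j^{-1}]\in\pst=H$; since $H$ is a subgroup, the whole product lies in $H$, giving $\pi(\U,x)\leq H$.

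Second, by Corollary 3.5 the group $\pi(\U,x)$ is open in $\pi_1^{qtop}(X,x)$ (this is where local path connectedness of $X$ is used). Since $\pi(\U,x)\leq H\leq\pi_1^{qtop}(X,x)$ and $\pi_1^{qtop}(X,x)$ is a quasitopological group, Remark 3.4 immediately yields that $H$ is open, which completes the argument. One could equally route through Corollary 3.6: $\pi(\U,x)\leq H$ forces the core $H_{\pi_1(X,x)}$ to be open, and then Remark 3.4 applied to $H_{\pi_1(X,x)}\leq H\leq\pi_1^{qtop}(X,x)$ gives the same conclusion.

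I do not expect a genuine obstacle here. The only mildly technical point is the lifting computation establishing $\pi(\U,x)\leq H$, which is just the ``evenly covered'' half of the classical argument behind Theorem 1.1 and relies on unique path lifting for coverings; everything else is a direct appeal to Corollary 3.5 and Remark 3.4.
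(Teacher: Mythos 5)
Your proposal is correct and follows essentially the same route as the paper: the paper derives this corollary from the second half of the proof of Theorem 3.7, where one takes $\U$ to consist of evenly covered open sets, observes $\pi(\U,x)\leq H$, and then invokes the openness of Spanier groups (Corollary 3.5 / Corollary 3.6) together with Remark 3.4. You merely spell out the standard lifting argument behind $\pi(\U,x)\leq H$, which the paper leaves implicit.
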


Brazas and Fabel \cite[Theorem 46]{BF} proved that if $X$ is locally path connected, paracompact Hausdorrf, then there is a canonical bijection between the equivalence classes of connected coverings of $X$ and cojugacy classes of open subgroups of $\pi_1(X,x)$ with the shape topology. Using this fact and Theorem 3.7 we can obtain the following corollary.
\begin{corollary}
For a connected, locally path connected, paracompact and Hausdorrf space $X$, a subgroup $H$ of $\pi_1(X,x)$ is open with the shape topology if and only if the core of $H$ is open in $\pi_1^{qtop}(X,x)$.
\end{corollary}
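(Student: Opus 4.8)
The plan is to play the two available classifications of connected coverings of $X$ off against each other. On one hand, Theorem 3.7 (together with Corollaries 3.6 and 3.9) identifies the equivalence classes of connected coverings of $X$ with the conjugacy classes of subgroups of $\pi_1(X,x)$ whose core in $\pi_1(X,x)$ is open in $\pi_1^{qtop}(X,x)$; on the other hand, Brazas and Fabel's \cite[Theorem 46]{BF} identifies those same equivalence classes with the conjugacy classes of subgroups of $\pi_1(X,x)$ that are open in the shape topology. Both identifications are induced by the canonical assignment sending a connected covering $\pc$ to the conjugacy class of $\pst$. Since $X$ is connected, locally path connected, paracompact and Hausdorff, both classifications apply simultaneously, so the two families of conjugacy classes of subgroups must coincide; the corollary is then just the translation of this coincidence to the level of individual subgroups.

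For the forward implication I would argue as follows. Assume $H$ is open in $\pi_1(X,x)$ with the shape topology. By \cite[Theorem 46]{BF} the conjugacy class of $H$ corresponds to an equivalence class of connected coverings; choosing a suitable point $\ti{x}$ in the fibre over $x$ (recall that, by lifting loops, as $\ti{x}$ ranges over the fibre the subgroup $\pst$ ranges over the entire conjugacy class), we obtain a connected covering $\pc$ with $\pst=H$. Now take $\U$ to be an open cover of $X$ by evenly covered open subsets. The argument in the proof of Theorem 3.7 shows $\pi(\U,x)\le H$, whence Corollary 3.6 gives that the core $H_{\pi_1(X,x)}$ is open in $\pi_1^{qtop}(X,x)$.

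For the converse, assume $H_{\pi_1(X,x)}$ is open in $\pi_1^{qtop}(X,x)$. By Corollary 3.9 there is a connected covering $\pc$ with $\pst=H$. Under the Brazas--Fabel correspondence this covering is associated to the conjugacy class of $\pst=H$, which by \cite[Theorem 46]{BF} is a conjugacy class of subgroups open in the shape topology; since $\pi_1(X,x)$ with the shape topology is a topological group, conjugation is a homeomorphism and hence $H$ itself is open in the shape topology. This completes the equivalence.

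The only point that genuinely needs care — and the place I would be most careful — is the bookkeeping between individual subgroups and their conjugacy classes: one must check that both properties appearing in the statement are conjugation invariant (the property ``open in the shape topology'' because the shape-topologized fundamental group is a topological group, and the property ``core open in $\pi_1^{qtop}(X,x)$'' because $(g^{-1}Hg)_{\pi_1(X,x)}=H_{\pi_1(X,x)}$ for every $g$), and that the two classification bijections are induced by the same covering-to-subgroup assignment so that passing between them is legitimate. Once this is settled, the corollary follows from Theorem 3.7, Corollaries 3.6 and 3.9, and \cite[Theorem 46]{BF} with no further computation.
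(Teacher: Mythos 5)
Your proposal is correct and is essentially the paper's own argument: the paper derives this corollary precisely by playing Theorem 3.7 off against Brazas--Fabel's \cite[Theorem 46]{BF}, and your careful remarks about conjugation-invariance of both properties and the compatibility of the two bijections are exactly the (unstated) details needed. One minor slip: in the converse direction, the existence of a covering $\pc$ with $\pst=H$ from openness of the core is Corollary 3.8, not Corollary 3.9.
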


\section{Semicovering Spaces}
Brazas \cite[Definition 3.1]{Br1} introduces the notion of  semicovering map as a local homeomorphism with continuous lifting of paths and homotopies.
 He \cite[Corollary 7.20]{Br1} gives a classification for connected semicoverings and asserts that for a connected, locally wep-connected space $X$ there is a Galois correspondence between its equivalence classes of connected semicovering spaces and the conjugacy classes of open subgroups of its topological fundamental group $\pi_1^{\tau}(X,x)$. Now, using the above classification of connected semicoverings and Corollary 3.3 we can extend the well-known result of Spanier, Theorem 1.1, to connected semicoverings.

\begin{theorem}
For a connected, locally path connected space $X$ and a subgroup $H$ of $\pi_1(X,x)$, there exists a semicovering $\pc$ such that $p_*\pi_1(\wt{X},\ti{x})=H$ if and only if there is a path open cover $\V$ of $X$ such that $\wt{\pi}(\V,x)\leq H$.
\end{theorem}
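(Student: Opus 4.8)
The plan is to reduce the statement to a chain of three equivalences, each of which is either already established in this paper or available in the literature, and then concatenate them. The central link is Corollary 3.3: since $X$ is locally path connected, ``there is a path open cover $\V$ of $X$ with $\wt{\pi}(\V,x)\leq H$'' holds if and only if $H$ is an open subgroup of $\pi_1^{qtop}(X,x)$. The second link is that $\pi_1^{qtop}(X,x)$ and $\pi_1^{\tau}(X,x)$ have exactly the same open subgroups \cite[Proposition 4.4]{Br3}, so the previous condition is equivalent to $H$ being open in $\pi_1^{\tau}(X,x)$. The third link is Brazas's Galois correspondence for semicoverings \cite[Corollary 7.20]{Br1}: for a connected, locally path connected space $X$, the subgroups of $\pi_1(X,x)$ that occur as $p_*\pi_1(\wt{X},\ti{x})$ for a connected semicovering $\pc$ are precisely the open subgroups of $\pi_1^{\tau}(X,x)$ (up to conjugacy, which I address below). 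No fact about semicoverings beyond this classification is needed.

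To spell the two directions out: for the forward implication, suppose $\pc$ is a semicovering with $p_*\pi_1(\wt{X},\ti{x})=H$; replacing $\wt{X}$ by the path component of $\ti{x}$ (the restriction is still a semicovering, by path lifting, since $X$ is connected) we may assume $\wt{X}$ connected, so Brazas's classification forces $H$ to be open in $\pi_1^{\tau}(X,x)$, hence open in $\pi_1^{qtop}(X,x)$ by \cite[Proposition 4.4]{Br3}, and then Corollary 3.3 delivers a path open cover $\V$ of $X$ with $\wt{\pi}(\V,x)\leq H$. For the converse, suppose $\V$ is a path open cover with $\wt{\pi}(\V,x)\leq H$; Theorem 3.2 says $\wt{\pi}(\V,x)$ is open in $\pi_1^{qtop}(X,x)$, and since $\wt{\pi}(\V,x)\leq H\leq\pi_1^{qtop}(X,x)$ and any subgroup of a quasitopological group containing an open subgroup is itself open (Remark 3.4), $H$ is open in $\pi_1^{qtop}(X,x)$, hence in $\pi_1^{\tau}(X,x)$. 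Brazas's correspondence now yields a connected semicovering realizing the conjugacy class of $H$, and choosing the basepoint $\ti{x}$ in the fiber $p^{-1}(x)$ appropriately --- as in the classical passage from conjugacy classes of subgroups to pointed covering spaces --- gives $p_*\pi_1(\wt{X},\ti{x})=H$ exactly.

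The point that I expect to need the most care is the faithful use of \cite[Corollary 7.20]{Br1}: one must verify that a connected, locally path connected space satisfies the ``locally wep-connected'' hypothesis under which Brazas states his correspondence, and one must make explicit the passage from his statement --- phrased in terms of conjugacy classes of open subgroups --- to the on-the-nose equality $p_*\pi_1(\wt{X},\ti{x})=H$, since $\wt{\pi}(\V,x)$ and $H$ are only assumed comparable, not equal. Both points are routine but are precisely where a careless write-up would leave a gap; everything else reduces to bookkeeping with the three equivalences listed above.
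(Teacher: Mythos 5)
Your proposal is correct and follows exactly the route the paper intends: the paper states Theorem 4.1 without a written proof, remarking only that it follows from Brazas's classification of connected semicoverings \cite[Corollary 7.20]{Br1} together with Corollary 3.3 (and the fact that $\pi_1^{\tau}(X,x)$ and $\pi_1^{qtop}(X,x)$ share the same open subgroups), which is precisely your chain of equivalences. The two care points you flag --- verifying that locally path connected implies the ``locally wep-connected'' hypothesis of Brazas's correspondence, and passing from conjugacy classes to the on-the-nose equality $p_*\pi_1(\wt{X},\ti{x})=H$ by a choice of basepoint in the fiber --- are real but routine, and the paper itself glosses over both.
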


Brazas \cite[Proposition 3.7]{Br1} shows that every covering is a semicovering but the converse is not true in general. In fact he
\cite[Example 3.8]{Br1} shows that there is a semicovering and hence \cite[Remark 3.3]{Br1} a Serre fibration of Hawaiian earring with discrete fiber which is not a covering.
Using this example, we show that the classification of connected coverings given by Biss \cite[Theorem 5.5]{B} which we mentioned
in Section 1 does not hold.

\begin{example}
Let $X= \bigcup_{n\in \mathbb{N}}^{} \{(x,y) \in \mathbb{R}^2 |(x-\frac{1}{n})^2 + y^2 = \frac{1}{n^2}\}$ be the Hawaiian earing space. Brazas \cite[Example 3.8]{Br1} introduces a connected semicovering $\pc$ say and hence a Serre fibration of $X$ with discrete fiber which is not a covering. By classification of semicoverings \cite[Corollary 7.20]{Br1} $H = p_*\pi_1(\wt{X},\ti{x})$ is open in $\pi_1^{\tau}(X,x)$ and hence is open in $\pi_1^{qtop}(X,x)$. Now assume that the Biss's classification of connected coverings holds, then there exists a covering ${q:\wt{Y}\lo X}$ such that $p_*\pi_1(\wt{Y},\ti{y})=H$. Since every covering is a semicovering, ${q:\wt{Y}\lo X}$ is a semicovering which implies that $\pc$ and ${q:\wt{Y}\lo X}$ are equivalent
as semicoverings. Therefore $\pc$ is a covering which is a contradiction.
\end{example}

Brazas \cite[Corollary 7.2]{Br1} proves that for a connected, locally path connected and semilocally simply connected space $X$, the category of  all semicoverings of $X$, $\mathbf{SCov}(X)$ is equivalent to the category of all coverings of $X$, $\mathbf{Cov}(X)$ . He also raises a question that if there are more general conditions guaranteeing $\mathbf{SCov}(X)\simeq \mathbf{Cov}(X)$.  The last object of the paper is to find weaker conditions than semilocally simply connectedness under which every semicovering is a covering. First, we state the following note which seems interesting.
\begin{remark}
Since every covering is a semicovering and every semicovering equivalent to a covering as semicovering must be a covering, every semicovering of $X$ is covering if and only if for every open subgroup $H$ of $\pi_1^{qtop}(X,x)$, there exists a covering $\pc$ such that $p_*\pi_1(\wt{X},\ti{x})=H$ for some $\ti{x} \in \wt{X}$.
\end{remark}

Using the above remark we can prove the final main result of the paper.
\begin{theorem}
For a connected, locally path connected and semilocally small generated space $X$, every semicovering of $X$ is a covering.
\end{theorem}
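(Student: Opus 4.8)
The plan is to reduce the statement to the criterion of Remark 4.5: it suffices to show that for every open subgroup $H$ of $\pi_1^{qtop}(X,x)$ there is a covering $\pc$ with $\pst=H$. By Theorem 3.7 (equivalently Corollary 3.8), such a covering exists as soon as the core $H_{\pi_1(X,x)}$ is open in $\pi_1^{qtop}(X,x)$. So the whole problem collapses to the following purely group-theoretic/topological fact: \emph{if $X$ is connected, locally path connected and semilocally small generated, then every open subgroup of $\pi_1^{qtop}(X,x)$ has open core.} This is where the hypothesis of semilocal small generation must be used.

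To prove that, I would first observe that semilocal small generation gives, via the defining open cover, that the small generated subgroup $\psg$ is contained in every Spanier group $\pi(\U,x)$ arising from a cover refining the witnessing cover; more to the point, I would show directly that $\psg$ itself is contained in $H$ whenever $H$ is open. Indeed, let $H$ be open, so $q^{-1}(H)$ is open in $\Omega(X,x)$ and contains the constant loop. A generator of $\psg$ has the form $[\al*\bt*\al^{-1}]$ with $\al\in P(X,x)$ and $[\bt]\in\pi_1^s(X,\al(1))$ a small loop at $\al(1)$; using the local path connectedness of $X$ and the same ``surgery near the midpoint'' construction as in the proof of Theorem 2.1, one can homotope $\al*\al^{-1}$ (which lies in $q^{-1}(H)$) inside a prescribed basic open neighborhood so as to insert an arbitrarily small loop $\bt$ at $\al(1)$, concluding $[\al*\bt*\al^{-1}]\in H$. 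Hence $\psg\leq H$. Since $\psg$ is a \emph{normal} subgroup of $\pi_1(X,x)$ (it is generated by a conjugation-invariant set of elements), it follows that $\psg\leq H_{\pi_1(X,x)}$, the largest normal subgroup of $\pi_1(X,x)$ inside $H$.

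Next I would upgrade this to openness of the core. We now know $\psg\leq H_{\pi_1(X,x)}\leq H$. By Remark 3.4, to see that $H_{\pi_1(X,x)}$ is open it is enough to exhibit \emph{some} open subgroup of $\pi_1^{qtop}(X,x)$ contained in $H_{\pi_1(X,x)}$. The natural candidate is $\psg$ itself: I would show that under the standing hypotheses $\psg$ is open in $\pi_1^{qtop}(X,x)$. This should follow from Corollary 3.3 together with the witnessing cover of semilocal small generation: choosing for each $\al\in P(X,x)$ the open set $V_\al$ provided by an open neighborhood $U$ of $\al(1)$ with $i_*\pi_1(U,\al(1))\leq\psg$ (and shrinking to a path connected one by local path connectedness), one gets a path open cover $\V$ with $\wt{\pi}(\V,x)\leq\psg$; hence $\psg$ is open by Corollary 3.3 (or Theorem 3.2). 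Consequently $H_{\pi_1(X,x)}$ contains the open subgroup $\psg$, so by Remark 3.4 it is itself open in $\pi_1^{qtop}(X,x)$.

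Finally, assembling the pieces: given a semicovering of $X$, it corresponds (by Brazas's classification, \cite[Corollary 7.20]{Br1}) to an open subgroup $H$ of $\pi_1^{\tau}(X,x)$, hence an open subgroup of $\pi_1^{qtop}(X,x)$; by the above its core is open in $\pi_1^{qtop}(X,x)$, so by Corollary 3.8 there is a covering $\pc$ with $\pst=H$; by Remark 4.5 the original semicovering is therefore (equivalent to) a covering. I expect the main obstacle to be the second paragraph — carefully justifying that an open subgroup $H$ swallows the whole small generated subgroup $\psg$, i.e.\ that one can realize an arbitrary small loop $\bt$ at $\al(1)$ inside any prescribed basic compact-open neighborhood of $\al*\al^{-1}$. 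This is essentially a localized version of the argument already carried out in the proof of Theorem 2.1, but one must be attentive to the fact that a generator of $\psg$ uses a genuinely small loop (one that lies in every neighborhood of $\al(1)$), which is exactly what lets the surgery stay inside the given neighborhood $\bigcap_{i\in A}\langle K_i,U_i\rangle$.
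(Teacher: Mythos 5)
Your proposal is correct, and its two essential ingredients are exactly those of the paper's proof: (i) every open subgroup $H$ of $\pi_1^{qtop}(X,x)$ contains $\psg$ (because a generator $[\al*\bt*\al^{-1}]$ of $\psg$ has $\bt$ small at $\al(1)$, hence representable inside whatever neighborhood $V_\al$ the path Spanier machinery of Section 3 --- or your midpoint surgery, which needs no normality of $H$ here since the conjugating path is the fixed $\al$ --- hands you), and (ii) semilocal small generation yields an open cover $\U$ with $\pi(\U,x)\leq\psg$. Where you diverge is in the packaging of the endgame: the paper simply concatenates $\pi(\U,x)\leq\psg\leq H$ and invokes Theorem 1.1 directly, whereas you take a detour through the open-core classification, first proving the (true, and not stated in the paper) auxiliary fact that $\psg$ is an open normal subgroup of $\pi_1^{qtop}(X,x)$, deducing that $H_{\pi_1(X,x)}\supseteq\psg$ is open, and then applying Corollary 3.8. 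This detour is sound --- $\psg$ is indeed normal, being generated by a conjugation-invariant set, and your path open cover $V_\al=U_{\al(1)}$ does give $\wt{\pi}(\V,x)\leq\psg$ --- but it is unnecessary: Corollary 3.8 is itself proved by funneling back through Theorem 1.1, so you are re-deriving the inclusion $\pi(\U,x)\leq H$ that you already have in hand. What your version buys is the explicit extra statement that $\psg$ is open and equals (up to containment) the smallest open normal subgroup, which is a nice by-product; what it costs is a longer chain of citations for the same conclusion.
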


\begin{proof}
Let $H$ be any open subgroup of $\pi_1^{qtop}(X,x)$. By Remark 4.3 it suffices to show that there exists a covering $\pc$ such that $p_*\pi_1(\wt{X},\ti{x})=H$. By Corollary 3.6 there is a path open cover $\V$ of $X$ such that $\wt{\pi}(\V,x)\leq H$. By definition of $\psg$ we can show that $\psg \leq \wt{\pi}(\V,x)$. Also by definition of semilocally small generatedness, there exists an open cover $\U$ of $X$ such that $\pi(\U,x)\leq \pi_1^{sg}(X,x)$. Therefore $\pi(\U,x)\leq H$. Now Theorem 1.1 gives the desired result.
\end{proof}

Finally, note that since $\pi_1^{\tau}(X,x)$ and $\pi_1^{qtop}(X,x)$ have the same open subgroups, we can replace $\pi_1^{qtop}(X,x)$ with
$\pi_1^{\tau}(X,x)$ in all of the results of the paper.







\begin{thebibliography}{9999}

\bibitem{A}{}{\sc A. Arhangelskii, M. Tkachenko}, Topological groups and related structures, Atlantis Studies in Mathematics,
2008.

\bibitem{B}{}{\sc  D. Biss},  The topological fundamental group and generalized covering spaces, {\it  Topology and
its Applications.} 124 (2002)  355-371.

\bibitem{Br2}{}{\sc J. Brazas},  The topological fundamental group and free topological groups, {\it Topology and its Applications}. 158 (2011) 779–802.

\bibitem{Br1}{}{\sc J. Brazas}, Semicoverings: a generalization of covering space theory, {\it Homology, Homotopy and Applications}, 14:1 (2012) 33-63.

\bibitem{Br3}{}{\sc J. Brazas},  The Fundamental group as topological group, arXiv:1009.3972v5.

\bibitem{BF}{}{\sc J. Brazas, P. Fabel}, Thick Spanier groups and the first shape group, {\it to appear in Rocky Mountain Journal of Mathematics},
arXiv:1207.1310v1.

\bibitem{F}{} {\sc P. Fabel}, Multiplication is discontinuous in the Hawaiian earring group (with the quotient topology),
{\it Bull. Polish. Acad. Math.}, 59 (2011) 77-83.

\bibitem{FRVZ}{}{\sc H. Fischer, D. Repovs, Z. Virk, A. Zastrow}, On semilocally simply connected spaces, {\it Topology and its Applications.} 158 (2011) 397-408.


\bibitem{S}{}{\sc  E.H. Spanier}, Algebraic Topology, McGraw-Hill, New York, 1966.

\bibitem{T2}{}{\sc H. Torabi, A. Pakdaman, B. Mashayekhy}, Topological fundamental groups and small generated coverings, arXiv:1102.1080v1.

\bibitem{V}{}{\sc Z. Virk}, Small loop spaces, {\it Topology and its Applications.} 157 (2010) 451$-$455.

\end{thebibliography}



\section*{References}




\end{document}